\numberwithin{equation}{section}
\theoremstyle{definition}
\numberwithin{equation}{section}
\newtheorem{theorem}{\bf Theorem}[section]
\newtheorem{remark}{\bf Remark}[section]
\newtheorem{proposition}{Proposition}[section]
\newtheoremstyle
{remarkstyle}
{}
{11pt}
{}
{}
{\bfseries}
{:}
{     }
{\thmname{#1} \thmnumber{#2} }
\theoremstyle{remarkstyle}
\begin{document}
	\title{Skellam Processes via Multiparameter Poisson Process}
	\author[Pradeep Vishwakarma]{Pradeep Vishwakarma}
	\address{Pradeep Vishwakarma, Theoretical Statistics and Mathematics Unit,
		Indian Statistical Institute, Kolkata, 700108, India.}
	\email{vishwakarmapr.rs@gmail.com}
	
	\subjclass[2020]{Primary: 	60G20, 60G55; Secondary: 60G60}
	
	\keywords{multiparameter Poisson process, generalized multiparameter Skellam process, path integrals}
	\date{\today}	
	\maketitle
\begin{abstract} 
We introduce a mltiparameter version of Skellam point process via multiparameter Poisson processes. Its distributional properties are studied in detail. Its compound representation is derived for a particular case. Also, its Riemann integral over a rectangle in $\mathbb{R}^M_+$, $M\ge1$ is introduced and a closed expression for its characteristic function is obtained. Later, we introduce a different version of multiparameter Skellam process, and derive a weak convergence result for it. Moreover, a two parameter fractional Skellam process is discussed.
\end{abstract}
	
\section{Introduction}\label{sec1}
The Skellam type distribution was first introduced and examined in \cite{Irwin1937, Skellam1946}. Since then, the Skellam process has been extensively studied, both from a theoretical perspective and for its diverse practical applications. It appears in various fields ranging from insurance to image processing, finance (see \cite{Barndorff-Nielsen2011}), \textit{etc}.  Skellam processes defined using a generalized counting process, together with their time-changed variants are studied in \cite{Kataria2022}. For a more detailed discussion on the difference of two independent Poisson processes, we refer the reader to \cite{Barndorff-Nielsen2011, Carr2011}.
 Additionally, a broader generalization of the Skellam point process and its fractional variants are introduced and studied in \cite{Cinque2025}. 
 
  Recently, a generalized Skellam fields on finite dimensional Euclidean space was introduced and studied in \cite{Vishwakarma2025b}. Also, a two parameter Skellam field with rectangular increments is considered, and its three different fractional variants are studied in detail. In this paper, we introduce and study Skellam point processes defined via a family of multiparameter Poisson process introduced in \cite{Vishwakarma2025}. Here, we consider multiparameter processes with linear increments (see \cite{Iafrate2024, Pedersen2003, Pedersen2004a}), which is different from the increments considered in \cite{Vishwakarma2025b}.

In \cite{Vishwakarma2025}, a multiparameter Poisson process with linear increments is introduced. Let $\mathbb{R}_+$ denote the set of non-negative real numbers. For $M\ge1$, a collection $\{X(\textbf{t}),\ \textbf{t}\in\mathbb{R}^M_+\}$ of random variables is called multiparameter ($M$-parameter) random process. A non-negative integer valued process $\{N(\textbf{t}),\ \textbf{t}\in\mathbb{R}^M_+\}$ is called multiparameter counting process if $N(\textbf{0})=0$, for any $\textbf{t}$, $N(\textbf{t})\ge0$, and for $\textbf{s}=(s_1,\dots,s_M)\preceq \textbf{t}=(t_1,\dots,t_M)$, that is, $s_i\leq t_i$, $i=1,\dots,M$, we have $N(\textbf{s})\leq N(\textbf{t})$.

A multiparameter counting process $\{N(\textbf{t}),\ \textbf{t}\in\mathbb{R}^M_+\}$ is called multiparameter Poisson process (MPP) with rate parameter $\textbf{0}\prec\Lambda=(\lambda_1,\dots,\lambda_M)\in\mathbb{R}^M_+$ if\\
\noindent (i) for any $\textbf{s}\preceq\textbf{t}$, the increment $N(\textbf{t})-N(\textbf{s})$ has same distribution as $N(\textbf{t}-\textbf{s})$, that is, it has stationary increments;\\
\noindent (ii) for $\textbf{t}^{(r)}$, $r=1,\dots,m$ in $\mathbb{R}^M_+$ such that $\textbf{0}\preceq\textbf{t}^{(1)}\prec\dots\prec\textbf{t}^{(m)}$, the increments $N(\textbf{t}^{(2)})-N(\textbf{t}^{(1)}),\dots,N(\textbf{t}^{(m)})-N(\textbf{t}^{(m-1)})$ are independent of each other, that is, it has independent increments;\\
\noindent (iii) the random variable $N(\textbf{t})$ has Poisson distribution with mean $\Lambda\cdot\textbf{t}$, where $\Lambda\cdot\textbf{t}=\lambda_1t_1+\dots+\lambda_Mt_m$.

The MPP is a multiparameter L\'evy process in the sense of \cite{Pedersen2003, Pedersen2004a}. For a finite subset $\mathcal{J}\subset\mathbb{R}-\{0\}$, let $\{N_j(\textbf{t}),\ \textbf{t}\in\mathbb{R}^M_+\}$ be independent MPPs. We consider a multiparameter process $\{S(\textbf{t}),\ \textbf{t}\in\mathbb{R}^M_+\}$ defined by $S(\textbf{t})\coloneqq\sum_{j\in\mathcal{J}}jN_j(\textbf{t})$. We call it the generalized multiparameter Skellam process (GMSP). When $M=1$, it reduces to a generalized Skellam point process introduced and studied in \cite{Cinque2025}. 

As the GMSP belongs to the class of multiparameter L\'evy processes. From the result of \cite{Barndorff-Nielsen2001}, it follows that is equal in distribution to a sum of independent one parameter L\'evy processes. More precisely, it coincides in distribution with an additive L\'evy process as described in \cite{Khoshnevisan2002}. Also, various applications of such multiparameter process in other areas of mathematics are explored in \cite{Khoshnevisan2002}. Multiparameter L\'evy processes have important applications in fields such as, statistical mechanics, and brain imaging (see \cite{Cao1999}). For more detail on both the theoretical aspects and applications of multiparameter L\'evy processes, we refer to \cite{Barndorff-Nielsen2001, Pedersen2003, Pedersen2004a}.

In the next section, we introduce a multiparameter process, namely, the generalized multiparameter Skellam process. It is defined as the sum of scaled independent multiparameter Poisson processes. We establish a weak convergence result and provide several characterizations of this process. Additionally, we evaluate its integral over a rectangle in finite-dimensional Euclidean space and derive an explicit expression for its characteristic function. In the final section, we introduce a different version of the multiparameter Skellam process and obtain a weak convergence result for it. Finally, we focus on the two-parameter case and analyze its time-changed variant, where the time-changing component is a bivariate process whose marginals are independent inverse stable subordinators.
\section{Generalized multiparameter Skellam process}\label{sec2}
In this section, we introduce a multiparameter Skellam process via MPP.
For a finite subset $\mathcal{J}\subset\mathbb{R}-\{0\}$, let $\{N_j(\textbf{t}),\ \textbf{t}\in\mathbb{R}^M_+\}$, $j\in\mathcal{J}$ be independent multiparameter Poisson processes (for definition see Section \ref{sec1}) with rate parameters $\textbf{0}\prec\Lambda_j=(\lambda_1^{(j)},\dots,\lambda_M^{(j)})\in\mathbb{R}^M_+$, $j\in\mathcal{J}$, respectively. Let us consider a multiparameter process $S=\{S(\textbf{t}),\ \textbf{t}\in\mathbb{R}^M_+\}$ defined as follows:
\begin{equation}\label{gmsp}
	S(\textbf{t})\coloneqq\sum_{j\in\mathcal{J}}jN_j(\textbf{t}),\ \textbf{t}\in\mathbb{R}^M_+.
\end{equation}
We call it the generalized multiparameter Skellam process (GMSP) and refer it as $S\sim GMSP\{\Lambda_j\}_{j\in\mathcal{J}}$. Its mean and variance are given by $\mathbb{E}S(\textbf{t})=\sum_{j\in\mathcal{J}}j\Lambda_j\cdot\textbf{t}$ and $\mathbb{V}\mathrm{ar}S(\textbf{t})=\sum_{j\in\mathcal{J}}j^2\Lambda_j\cdot\textbf{t}$, respectively, where $\boldsymbol{\Lambda}_j\cdot\textbf{t}=\lambda_1^{(j)}t_1+\dots+\lambda_M^{(j)}t_M$ is the standard dot product on $\mathbb{R}^M$. The probability generating function (pgf) of GMSP is
\begin{equation}\label{gmsppgf}
\mathbb{E}u^{S(\textbf{t})}=\exp\bigg(\sum_{j\in\mathcal{J}}\Lambda_j\cdot\textbf{t}(u^j-1)\bigg),\ 0<u\leq1,
\end{equation}
where we used $\mathbb{E}u^{N_j(\textbf{t})}=\exp(\Lambda_j\cdot\textbf{t}(u-1))$, the pgf of MPP.

For $M=1$, the GMSP reduces to the one parameter generalized Skellam process (GSP) defined as follows:
\begin{equation}
	S(t)\coloneqq\sum_{j\in\mathcal{J}}jN_j(t),\ t\ge0,
\end{equation}
where $\{N_j(t),\ t\ge0\}$, $j\in\mathcal{J}$ are independent Poisson processes with rates $\lambda_j$, $j\in\mathcal{J}$, respectively. It was introduced and studied in \cite{Cinque2025}. We denote it as $S\sim GSP\{\lambda_j\}_{j\in\mathcal{J}}$. Moreover, if $\{S_k(t),\ t\ge0\}$, $k=1,2,\dots,M$ are independent GSP then the multiparameter process defined by $\sum_{k=1}^{M}S_k(t_k)$, $(t_1,\dots,t_M)\in\mathbb{R}^M_+$ is a GMSP.
\begin{remark}\label{rem21}
	From Remark 3.3 of \cite{Vishwakarma2025}, it follows that for each MPP $\{N_j(\textbf{t}),\ \textbf{t}\in\mathbb{R}^M_+\}$, $j\in\mathcal{J}$ there exist independent one parameter Poisson processes $\{N_1^{(j)}(t),\ t\ge0\},\dots,\{N_M^{(j)}(t),\ t\ge0\}$ with rates $\lambda_1^{(j)},\dots,\lambda_M^{(j)}$, respectively, such that $N_j(\textbf{t})\overset{d}{=}\sum_{k=1}^{M}N_k^{(j)}(t_k)$, $\textbf{t}=(t_1,\dots,t_M)\in\mathbb{R}^M_+$. Here, $\overset{d}{=}$ denotes the equality in distribution. Hence, GMSP satisfies
$S(\textbf{t})\overset{d}{=}\sum_{j\in\mathcal{J}}j\sum_{k=1}^{M}N_k^j(t_k)=\sum_{k=1}^{M}S_k(t_k)$, $ \textbf{t}=(t_1,\dots,t_M)\in\mathbb{R}^2_+$, where $\{S_k(t),\ t\ge0\}$'s are independent one parameter GSP. Moreover, for $\textbf{s}=(s_1,\dots,s_M)$ and $\textbf{t}=(t_1,\dots,t_M)$ in $\mathbb{R}^M_+$, the auto covariance of MPP is $\mathbb{C}\mathrm{or}(N_j(\textbf{s}),N_j(\textbf{t}))=\sum_{k=1}^{M}\lambda_ks_k\min\{s_k,t_k\}$. Therefore, the auto covariance of GMSP is given by $\mathbb{C}\mathrm{ov}(S(\textbf{s}),S(\textbf{t}))=\sum_{j\in\mathcal{J}}j^2\sum_{k=1}^{M}\lambda_k^{(j)}\min\{s_k,t_k\}$.
\end{remark}
\begin{remark}
	Let $\{\boldsymbol{\Lambda}_k\}_{j\in\mathcal{J}}\subset\mathbb{R}^M_+$ be a finite sequence such that $\boldsymbol{\Lambda}_j=(\lambda_1^{(j)},\dots,\lambda_M^{(j)})\succ\textbf{0}$ for each $j\in\mathcal{J}$. Let $S_k=\{S_k(t),\ t\ge0\}$, $k=1,\dots,M$ be independent integer valued point processes. Then, in view of Theorem 3.1 of \cite{Vishwakarma2025}, the multiparameter process $S=\{S_1(t_1)+\dots+S_M(t_M),\ (t_1,\dots,t_M)\in\mathbb{R}^M_+\}$ is a GMSP, that is, $S\sim GMSP\{\boldsymbol{\Lambda}_j\}_{j\in\mathcal{J}}$ if and only if $S_k$ is GSP, that is, $S_k\sim GSP\{\lambda_k^{(j)}\}_{j\in\mathcal{J}}$ for each $k=1,\dots,M$.
\end{remark}
The following result provides a convergence result for GMSP. Its proof is similar to the case of one parameter generalized Skellam process studied in \cite{Cinque2025}. However, for one parameter case, we have a stronger weak convergence result.
\begin{proposition}
	For $k=1,\dots,M$, let $p^{(n)}_{l_k,j}\in(0,1)$ for all $l_k\ge1$, $n\ge1$ and $j\in\mathcal{J}$ such that $\sum_{j\in\mathcal{J}}p^{(n)}_{l_k,j}<1$. Let  $\{X^{(n)}_{l_k}\}_{l_1\ge1}$, $k=1,\dots,M$ be independent sequences of independent random variables with
	\begin{equation*}
		X_{l_k}^{(n)}=\begin{cases}
			j\in\mathcal{J},\ \text{with probability}\ p_{l_k,j}^{(n)},\\
			0,\ \text{with probability}\ 1-\sum_{j\in\mathcal{J}}p^{(n)}_{l_k,j}.
		\end{cases}
	\end{equation*}
Also, let us consider a $M$-parameter process $\{S^{(n)}(\textbf{t}),\ \textbf{t}\in\mathbb{R}^M_+\}$ defined as
	\begin{equation*}
		S^{(n)}(\textbf{t})\coloneqq\sum_{k=1}^{M}\sum_{l_k=1}^{[nt_k]}X_{l_k}^{(n)},\ \textbf{t}=(t_1,\dots,t_M)\in\mathbb{R}^M_+.
	\end{equation*}
	If 
	\begin{equation}\label{limconds}
		\sum_{k=1}^{M}\sum_{l_k=1}^{[nt_k]}p_{l_k,j}^{(n)}\longrightarrow\Lambda_j\cdot\textbf{t},\ \textbf{t}\in\mathbb{R}^M_+\ \text{and}\ \max_{0<l_k\leq n}p_{l_k,j}^{(n)}\longrightarrow0\ \text{as}\ n\longrightarrow\infty\ \text{for all $1\leq k\leq M$},
	\end{equation}
	then for $\textbf{t}^{(r)}$, $r=1,\dots,m$ in $\mathbb{R}^M_+$ such that $\textbf{0}\preceq\textbf{t}^{(1)}\preceq\dots\preceq\textbf{t}^{(m)}$, we have
	\begin{equation}\label{cnv1}
		(S^{(n)}(\textbf{t}^{(1)}),\dots,S^{(n)}(\textbf{t}^{(m)}))\overset{d}{\longrightarrow}(S(\textbf{t}^{(1)}),\dots,S(\textbf{t}^{(m)}))\ \text{as}\ n\longrightarrow\infty,
	\end{equation}
	where $\overset{d}{\longrightarrow}$ denotes the convergence in distribution, and $\{S(\textbf{t}),\ \textbf{t}\in\mathbb{R}^M_+\}$ is the GMSP. 
\end{proposition}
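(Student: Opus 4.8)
The plan is to deduce the statement from the one-parameter generalized Skellam convergence result of \cite{Cinque2025} by first splitting $S^{(n)}$ along its $M$ coordinates, then verifying the one-parameter hypotheses coordinatewise, and finally reassembling the limit using independence and the continuous mapping theorem.

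First, because the sequences $\{X^{(n)}_{l_k}\}_{l_k\ge1}$, $k=1,\dots,M$, are independent of one another, for every $\textbf{t}=(t_1,\dots,t_M)$ we have $S^{(n)}(\textbf{t})=\sum_{k=1}^{M}S^{(n)}_k(t_k)$ with $S^{(n)}_k(s)\coloneqq\sum_{l_k=1}^{[ns]}X^{(n)}_{l_k}$, and the one-parameter processes $S^{(n)}_1,\dots,S^{(n)}_M$ are independent. Next I would specialize \eqref{limconds} to the points $\textbf{t}$ having only their $k$-th coordinate nonzero, say equal to $s$: then $\Lambda_j\cdot\textbf{t}=\lambda_k^{(j)}s$ and all inner sums over indices other than $k$ are empty, so that $\sum_{l_k=1}^{[ns]}p_{l_k,j}^{(n)}\longrightarrow\lambda_k^{(j)}s$ for every $s\ge0$ and every $j\in\mathcal{J}$, while $\max_{l_k}p_{l_k,j}^{(n)}\to0$ is already part of \eqref{limconds}. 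These are exactly the hypotheses under which \cite{Cinque2025} proves convergence of the one-parameter process with increments of the above form to a $GSP$; hence, for the chain $0\le t_k^{(1)}\le\dots\le t_k^{(m)}$ obtained from $\textbf{t}^{(1)}\preceq\dots\preceq\textbf{t}^{(m)}$, one obtains $(S^{(n)}_k(t^{(1)}_k),\dots,S^{(n)}_k(t^{(m)}_k))\overset{d}{\longrightarrow}(S_k(t^{(1)}_k),\dots,S_k(t^{(m)}_k))$ for each $k$, where $S_k\sim GSP\{\lambda_k^{(j)}\}_{j\in\mathcal{J}}$.

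Alternatively, this one-parameter step can be carried out directly: the characteristic function of $X^{(n)}_{l_k}$ is $1+\sum_{j\in\mathcal{J}}p^{(n)}_{l_k,j}(e^{ij\xi}-1)$, and writing the joint characteristic function of the increments $S^{(n)}_k(t^{(q)}_k)-S^{(n)}_k(t^{(q-1)}_k)$ as a product over $l_k$, the elementary estimate comparing $\prod_{l_k}(1+\varepsilon_{l_k})$ with $\exp(\sum_{l_k}\varepsilon_{l_k})$ — legitimate because $\max_{l_k}|\varepsilon_{l_k}|\to0$ while $\sum_{l_k}|\varepsilon_{l_k}|$ stays bounded, both consequences of \eqref{limconds} — together with $\sum_{l_k=[nt_k^{(q-1)}]+1}^{[nt_k^{(q)}]}p^{(n)}_{l_k,j}\to\lambda_k^{(j)}(t^{(q)}_k-t^{(q-1)}_k)$ identifies the limit as a vector of independent generalized-Skellam increments. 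Either way, we then combine: the prelimit vectors for $k=1,\dots,M$ are independent and the limits $S_1,\dots,S_M$ are independent, so the $M$-tuple converges jointly; the continuous mapping theorem applied to coordinatewise summation gives $(S^{(n)}(\textbf{t}^{(1)}),\dots,S^{(n)}(\textbf{t}^{(m)}))\overset{d}{\longrightarrow}\big(\sum_{k=1}^{M}S_k(t^{(1)}_k),\dots,\sum_{k=1}^{M}S_k(t^{(m)}_k)\big)$, and the right-hand side is distributed as $(S(\textbf{t}^{(1)}),\dots,S(\textbf{t}^{(m)}))$ by Remark \ref{rem21}.

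The individual estimates are routine; the point that requires attention is the bookkeeping. One must check that the single limiting relation in \eqref{limconds} really does encode a separate hypothesis for each coordinate (this is what the choice of coordinate-axis points achieves), that the chain condition $\textbf{t}^{(1)}\preceq\dots\preceq\textbf{t}^{(m)}$ is precisely what makes the one-parameter finite-dimensional statement applicable in every coordinate, and that independence across $k$ upgrades the $M$ separate convergences to a joint one before the summation map is applied. This last structural feature is also why only convergence of finite-dimensional distributions is asserted here, in contrast to the stronger functional convergence available in the one-parameter setting of \cite{Cinque2025}.
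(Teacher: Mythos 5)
Your proof is correct, but it is organized differently from the paper's. The paper works directly with the multiparameter process: it notes that $S^{(n)}$ has independent increments along chains, reduces the claim to convergence of a single increment $S^{(n)}(\textbf{t})-S^{(n)}(\textbf{s})$, and computes the characteristic function of that increment as a double product over $k$ and $l_k$, passing to the limit via $\ln(1+x)\sim x$ under the two hypotheses in (\ref{limconds}) --- precisely the estimate you sketch in your second paragraph, but carried out for all coordinates simultaneously rather than one coordinate at a time. You instead split $S^{(n)}$ into $M$ independent one-parameter partial-sum processes, recover the coordinatewise hypotheses by evaluating the assumed limit at points on the coordinate axes, invoke (or re-derive) the one-parameter result of \cite{Cinque2025}, and reassemble via joint convergence of independent components and the continuous mapping theorem. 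Your route buys modularity, delegating the analytic work to the cited one-parameter theorem, at the cost of two structural steps the paper's direct computation avoids: upgrading the $M$ separate convergences to a joint one (which you correctly justify by independence), and identifying $(\sum_{k}S_k(t^{(1)}_k),\dots,\sum_{k}S_k(t^{(m)}_k))$ with $(S(\textbf{t}^{(1)}),\dots,S(\textbf{t}^{(m)}))$ as a joint law; the latter needs slightly more than the single-point identity of Remark \ref{rem21}, namely that both vectors have independent increments along the chain with matching increment distributions --- true, but worth a sentence. Both arguments share the same analytic core, and your closing observation about why only chain-ordered finite-dimensional distributions are asserted matches the paper's implicit use of the same reduction.
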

\begin{proof}
	For $\textbf{s}$ and $\textbf{t}$ in $\mathbb{R}^M_+$ such that $\textbf{s}\preceq\textbf{t}$, the increment of $\{S^{(n)}(\textbf{t}),\ \textbf{t}\in\mathbb{R}^M_+\}$ is given by 
	\begin{equation*}
		S^{(n)}(\textbf{t})-S^{(n)}(\textbf{s})=\sum_{k=1}^{M}\sum_{l_k=[ns_k]+1}^{[nt_k]}X_{l_k}^{(n)}.
	\end{equation*}
	Thus, $\{S^{(n)}(\textbf{t}),\ \textbf{t}\in\mathbb{R}^M_+\}$ has independent increments. So, to prove (\ref{cnv1}), it is enough to show that $S^{(n)}(\textbf{t})-S^{(n)}(\textbf{s})\overset{d}{\longrightarrow}S(\textbf{t})-S(\textbf{s})$ whenever $\textbf{s}\preceq\textbf{t}$. The characteristic function of $S^{(n)}(\textbf{t})-S^{(n)}(\textbf{s})$ is given by
	\begin{align*}
		\mathbb{E}\exp\big(iu(S^{(n)}(\textbf{t})-S^{(n)}(\textbf{s}))\big)&=\prod_{k=1}^{M}\prod_{l_k=[ns_k]+1}^{[nt_k]}\mathbb{E}e^{iuX_{l_k}^{(n)}}\\
		&=\prod_{k=1}^{M}\prod_{l_k=[ns_k]+1}^{[nt_k]}\bigg(\sum_{j\in\mathcal{J}}p^{(n)}_{l_k,j}(e^{iuj}-1)+1\bigg)\\
		&=\exp\bigg(\sum_{k=1}^{M}\sum_{l_k=[ns_k]+1}^{[nt_k]}\ln\bigg(\sum_{j\in\mathcal{J}}p^{(n)}_{l_k,j}(e^{iuj}-1)+1\bigg)\bigg)\\
		&\sim\exp\bigg(\sum_{k=1}^{M}\sum_{l_k=[ns_k]+1}^{[nt_k]}\sum_{j\in\mathcal{J}}p^{(n)}_{l_k,j}(e^{iuj}-1)\bigg),\ u\in\mathbb{R},
	\end{align*}
	where the last step follows from the second assumption in (\ref{limconds}) and using the approximation $\ln(1+x)\sim x$ as $x\to0$. Now, by using the first assumption in (\ref{limconds}), we get
	\begin{equation}\label{GMSPch}
		\lim_{n\longrightarrow\infty}\mathbb{E}\exp\big(iu(S^{(n)}(\textbf{t})-S^{(n)}(\textbf{s}))\big)=\exp\bigg(\sum_{j\in\mathcal{J}}\boldsymbol{\Lambda}_j\cdot(\textbf{t}-\textbf{s})(e^{iuj}-1)\bigg),\ u\in\mathbb{R}.
	\end{equation}
	As $S(\textbf{t})-S(\textbf{s})\overset{d}{=}S(\textbf{t}-\textbf{s})$ whenever $\textbf{s}\preceq\textbf{t}$, from (\ref{gmsppgf}), we have $\mathbb{E}e^{iuS(\textbf{t}-\textbf{s})}=\exp(\sum_{j\in\mathcal{J}}\boldsymbol{\Lambda}_j(\textbf{t}-\textbf{s})(e^{iuj}-1))$, $u\in\mathbb{R}$,
	which coincides with (\ref{GMSPch}). This completes the proof.
\end{proof}
\begin{proposition}\label{gsprep}
	Let $\{Y^k_l\}_{l\ge0}$, $k=1,\dots,M$ be sequences of independent and identically distributed (iid) random variables such that $\mathrm{Pr}\{Y^k_1=j\}=\lambda^{(j)}_k/\sum_{j\in\mathcal{J}}\lambda^{(j)}_k$, $j\in\mathcal{J}$, where $\lambda^{(j)}_k$'s are positive constants. Let $\{N_1(t),\ t\ge0\},\dots,\{N_M(t),\ t\ge0\}$ be one parameter Poisson processes with rates $\sum_{j\in\mathcal{J}}\lambda_1^{(j)},\dots,\sum_{j\in\mathcal{J}}\lambda_M^{(j)}$, respectively. Also, we assume that all the collections of random variables and processes appearing here are mutually independent. Then, the GMSP defined in (\ref{gmsp}) satisfies the following equality:
	\begin{equation*}
		S(\textbf{t})\overset{d}{=}\sum_{k=1}^{M}\sum_{l=1}^{N_k(t_k)}Y_l^k,\ \textbf{t}=(t_1,\dots,t_M)\in\mathbb{R}^M_+.
	\end{equation*}
\end{proposition}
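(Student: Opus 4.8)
The plan is to reduce the statement to the one-parameter case via Remark \ref{rem21} and then match generating functions. By Remark \ref{rem21} we have $S(\textbf{t})\overset{d}{=}\sum_{k=1}^{M}S_k(t_k)$, where $S_1,\dots,S_M$ are independent one-parameter generalized Skellam processes with $S_k\sim GSP\{\lambda_k^{(j)}\}_{j\in\mathcal{J}}$. On the right-hand side of the asserted identity the Poisson processes $\{N_k(t),\ t\ge0\}$ and the iid families $\{Y_l^k\}_{l\ge1}$ are mutually independent across $k$; hence it is enough to establish, for each fixed $k\in\{1,\dots,M\}$ and each $t\ge0$, the one-parameter identity $S_k(t)\overset{d}{=}\sum_{l=1}^{N_k(t)}Y_l^k$, and then multiply the corresponding pgfs over $k=1,\dots,M$.

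For the one-parameter identity I would condition on $N_k(t)$: since $N_k(t)$ is Poisson with mean $t\sum_{j\in\mathcal{J}}\lambda_k^{(j)}$ and is independent of the $Y_l^k$'s,
\[
\mathbb{E}u^{\sum_{l=1}^{N_k(t)}Y_l^k}=\mathbb{E}\big(\mathbb{E}u^{Y_1^k}\big)^{N_k(t)}=\exp\Big(t\sum_{j\in\mathcal{J}}\lambda_k^{(j)}\big(\mathbb{E}u^{Y_1^k}-1\big)\Big).
\]
Substituting $\mathbb{E}u^{Y_1^k}=\big(\sum_{j\in\mathcal{J}}u^j\lambda_k^{(j)}\big)\big/\big(\sum_{j\in\mathcal{J}}\lambda_k^{(j)}\big)$ cancels the normalizing constant and gives $\exp\big(t\sum_{j\in\mathcal{J}}\lambda_k^{(j)}(u^j-1)\big)$, which is precisely the pgf of $S_k(t)$ read off from \eqref{gmsppgf} with $M=1$. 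Therefore $S_k(t)$ and $\sum_{l=1}^{N_k(t)}Y_l^k$ have the same distribution. (If one wishes to sidestep the restriction $0<u\le1$ forced by possibly negative elements of $\mathcal{J}$, the identical computation with $u$ replaced by $e^{iu}$ works at the level of characteristic functions, where $|\mathbb{E}e^{iuY_1^k}|\le1$ makes the conditioning step transparent.)

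Finally, by mutual independence, the pgf of $\sum_{k=1}^{M}\sum_{l=1}^{N_k(t_k)}Y_l^k$ factors as $\prod_{k=1}^{M}\exp\big(t_k\sum_{j\in\mathcal{J}}\lambda_k^{(j)}(u^j-1)\big)=\exp\big(\sum_{j\in\mathcal{J}}\boldsymbol{\Lambda}_j\cdot\textbf{t}\,(u^j-1)\big)$, which by \eqref{gmsppgf} equals the pgf of $S(\textbf{t})$; this yields the claimed equality in distribution. I do not expect any genuine obstacle here: the only delicate point is the standard justification of the random-sum identity $\mathbb{E}u^{\sum_{l=1}^{N}Y_l}=\mathbb{E}(\mathbb{E}u^{Y_1})^N$, which is immediate once the finiteness of $\mathbb{E}u^{Y_1^k}$ (equivalently, $|\mathbb{E}e^{iuY_1^k}|\le1$) is noted, and everything else is a routine pgf manipulation.
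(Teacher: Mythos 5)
Your proof is correct and follows essentially the same route as the paper: the paper likewise reduces to the one-parameter case via Remark \ref{rem21} and then simply cites Proposition 3.2 of \cite{Cinque2025} for the compound-Poisson representation of a one-parameter GSP, which is exactly the pgf computation you carry out explicitly. The only difference is that you supply the details of the cited step (and the harmless remark about characteristic functions versus pgfs), so there is nothing to correct.
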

\begin{proof}
	In view of Remark \ref{rem21}, the proof follows from Proposition 3.2 of \cite{Cinque2025}.
\end{proof}

The following result provides a compound representation for GMSP in a particular case.
\begin{proposition}\label{cpprep}
	Let $S\sim GMSP\{\Lambda_j\}_{j\in J}$ be a GMSP such that $\textbf{0}\prec\Lambda_j=(\lambda^{(j)},\dots,\lambda^{(j)})\in\mathbb{R}^M$ for each $j\in\mathcal{J}$, and let $Y_1,Y_2,\dots$ be iid random variables with common distribution $\mathrm{Pr}\{Y_1=j\}=\lambda^{(j)}/\sum_{j\in\mathcal{J}}\lambda^{(j)}$. Also, let $\{N(\textbf{t}),\ \textbf{t}\in\mathbb{R}^M_+\}$ be a MPP with rate parameter $(\sum_{j\in\mathcal{J}}\lambda^{(j)},\dots,\sum_{j\in\mathcal{J}}\lambda^{(j)})\in\mathbb{R}^M$, and it is independent of $Y_l$'s. Then, the following equality holds:
	\begin{equation*}
		S(\textbf{t})\overset{d}{=}\sum_{l=1}^{N(\textbf{t})}Y_l,\ \textbf{t}\in\mathbb{R}^M_+.
	\end{equation*}
\end{proposition}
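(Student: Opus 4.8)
The plan is to identify the distribution of $\sum_{l=1}^{N(\textbf{t})}Y_l$ through its probability generating function (equivalently, its characteristic function) and check that it coincides, for each fixed $\textbf{t}\in\mathbb{R}^M_+$, with the pgf of $S(\textbf{t})$ recorded in \eqref{gmsppgf}. Since an integer-valued random variable is determined by its pgf (and, should $\mathcal{J}$ be allowed to contain non-integer values, any random variable is determined by its characteristic function), this yields the claimed equality in distribution. So the whole proof is a short generating-function matching.

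Concretely, I would first specialize \eqref{gmsppgf}: because $\boldsymbol{\Lambda}_j=(\lambda^{(j)},\dots,\lambda^{(j)})$ we have $\boldsymbol{\Lambda}_j\cdot\textbf{t}=\lambda^{(j)}(t_1+\dots+t_M)$, so
\begin{equation*}
	\mathbb{E}u^{S(\textbf{t})}=\exp\Big((t_1+\dots+t_M)\sum_{j\in\mathcal{J}}\lambda^{(j)}(u^j-1)\Big).
\end{equation*}
Next, writing $G(u)\coloneqq\mathbb{E}u^{Y_1}=\big(\sum_{j\in\mathcal{J}}\lambda^{(j)}\big)^{-1}\sum_{j\in\mathcal{J}}\lambda^{(j)}u^j$ for the pgf of the summands, I would condition on $N(\textbf{t})$ and use the independence of $N$ and the $Y_l$'s together with the i.i.d.\ property to get $\mathbb{E}u^{\sum_{l=1}^{N(\textbf{t})}Y_l}=\mathbb{E}\big[G(u)^{N(\textbf{t})}\big]$. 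Since $N(\textbf{t})$ is Poisson with mean $\big(\sum_{j\in\mathcal{J}}\lambda^{(j)}\big)(t_1+\dots+t_M)$, the pgf of the MPP evaluated at $G(u)$ gives
\begin{equation*}
	\mathbb{E}\big[G(u)^{N(\textbf{t})}\big]=\exp\Big((t_1+\dots+t_M)\Big(\sum_{j\in\mathcal{J}}\lambda^{(j)}\Big)\big(G(u)-1\big)\Big),
\end{equation*}
and finally the algebraic identity $\big(\sum_{j\in\mathcal{J}}\lambda^{(j)}\big)(G(u)-1)=\sum_{j\in\mathcal{J}}\lambda^{(j)}u^j-\sum_{j\in\mathcal{J}}\lambda^{(j)}=\sum_{j\in\mathcal{J}}\lambda^{(j)}(u^j-1)$ shows this equals $\mathbb{E}u^{S(\textbf{t})}$, completing the argument.

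There is no genuine obstacle here; the only points needing care are bookkeeping ones. The rate parameter of $N$ must be exactly $\sum_{j\in\mathcal{J}}\lambda^{(j)}$ in each coordinate so that the cancellation in the last display goes through — this is precisely why the hypothesis forces all coordinates of each $\boldsymbol{\Lambda}_j$ to be equal. If one wants the statement for arbitrary $\mathcal{J}\subset\mathbb{R}-\{0\}$, I would simply replace $u^{(\cdot)}$ by $e^{iu(\cdot)}$ throughout and run the identical computation with characteristic functions. (The same computation, applied to increments along a chain $\textbf{t}^{(1)}\preceq\dots\preceq\textbf{t}^{(m)}$ together with the independent-increments property of the MPP, would even upgrade the conclusion to equality of finite-dimensional distributions, though only the one-dimensional statement is asserted here.)
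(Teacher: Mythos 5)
Your proposal is correct and follows essentially the same route as the paper: condition on $N(\textbf{t})$, use the pgf of $Y_1$ and the Poisson pgf of the MPP, and match the result with \eqref{gmsppgf} after the cancellation $\big(\sum_{j\in\mathcal{J}}\lambda^{(j)}\big)(G(u)-1)=\sum_{j\in\mathcal{J}}\lambda^{(j)}(u^j-1)$. Your added remark that for non-integer $\mathcal{J}$ one should work with characteristic functions rather than pgfs is a sensible refinement that the paper leaves implicit, but it does not change the argument.
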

\begin{proof}
	The pgf of $Y_1$ is given by $\mathbb{E}u^{Y_1}=\sum_{j\in\mathcal{J}}\lambda^{(j)}u^j/\sum_{j\in\mathcal{J}}\lambda^{(j)}$. Hence,
	\begin{align*}
		\mathbb{E}u^{\sum_{l=1}^{N(\textbf{t})}Y_l}&=\mathbb{E}\bigg(\frac{\sum_{j\in\mathcal{J}}\lambda^{(j)}u^j}{\sum_{j\in\mathcal{J}}\lambda^{(j)}}\bigg)^{N(\textbf{t})}\\
		&=\exp\bigg(\sum_{j\in\mathcal{J}}\lambda^{(j)}(t_1+\dots+t_M)\bigg(\frac{\sum_{j\in\mathcal{J}}\lambda^{(j)}u^j}{\sum_{j\in\mathcal{J}}\lambda^{(j)}}-1\bigg)\bigg)\\
		&=\exp\bigg((t_1+\dots+t_M)\sum_{j\in\mathcal{J}}\lambda^{(j)}(u^j-1)\bigg),
	\end{align*}
	which coincides with (\ref{gmsppgf}) for $\textbf{0}\prec\boldsymbol{\Lambda}_j=(\lambda^{(j)},\dots,\lambda^{(j)})\in\mathbb{R}^M$, $j\in\mathcal{J}$. This completes the proof.
\end{proof}

\begin{remark}
	Let $Y_1,Y_2,Y_3,\dots$ be iid random variables and $\{N(\textbf{t}), \textbf{t}\in\mathbb{R}^M_+\}$ be a MPP that is independent of $Y_l$'s. We consider a multiparameter compound Poisson process $\{\mathcal{N}(\textbf{t}),\ \textbf{t}\in\mathbb{R}^M_+\}$ defined by
	$\mathcal{N}(\textbf{t})\coloneqq\sum_{l=1}^{N(\textbf{t})}Y_l$. Then, it is a multiparameter L\'evy process in the sense of \cite{Pedersen2003, Pedersen2004a}. Hence, from Proposition 2.7 of \cite{Iafrate2024}, it follows that the process $\{\mathcal{N}(\textbf{t}),\ \textbf{t}\in\mathbb{R}^M_+\}$ is equal in distribution to a sum of one parameter independent L\'evy processes. Indeed, it is equal in distribution to a sum of $M$-many independent one parameter compound Poisson processes. Thus, Proposition \ref{cpprep} can be alternatively proved using Proposition \ref{gsprep} and Proposition 2.3 of \cite{Cinque2025}.
\end{remark}

\subsection{Multiparameter Skellam process} For $\mathcal{J}=\{1,-1\}$, the GMSP reduces to a multiparameter process $\{\tilde{S}(\textbf{t}),\ \textbf{t}\in\mathbb{R}^M_+\}$ defined by
\begin{equation*}
	\tilde{S}(\textbf{t})\coloneqq N_1(\textbf{t})-N_2(\textbf{t}),
\end{equation*}
where $\{N_i(\textbf{t}),\ \textbf{t}\in\mathbb{R}^M_+\}$, $i=1,2$ are independent MPPs with rate parameters $\boldsymbol{\Lambda}_i$, $i=1,2$, respectively. We call it the multiparameter Skellam process (MSP).

By using the convolution argument, the distribution $p(n,\textbf{t})=\mathrm{Pr}\{\tilde{S}(\textbf{t})=n\}$, $n\in\mathbb{Z}$ of MSP is given by
\begin{equation*}
	p(n,\textbf{t})=e^{-(\boldsymbol{\Lambda}_1+\boldsymbol{\Lambda}_2)\cdot\textbf{t}}\bigg(\frac{\boldsymbol{\Lambda}_1\cdot\textbf{t}}{\boldsymbol{\Lambda}_2\cdot\textbf{t}}\bigg)^{n/2}I_{|n|}(2\sqrt{(\boldsymbol{\Lambda}_1\cdot\textbf{t})(\boldsymbol{\Lambda}_2\cdot\textbf{t})}),\ n\in\mathbb{Z},\ \textbf{t}\in\mathbb{R}^M_+,
\end{equation*}
where $I_{|n|}$ is the modified Bessel function defined as follows:
\begin{equation}\label{Besselfn}
	I_{|n|}(x)=\sum_{r=0}^{\infty}\frac{(x/2)^{2m+|n|}}{\Gamma(m+|n|+1)m!},\ x\in\mathbb{R}.
\end{equation}

For $M=1$, the MSP reduces to the classical Skellam process $\{\tilde{S}(t)=N_1(t)-N_2(t),\ t\ge0\}$ introduced and studied in \cite{Skellam1946}, where $\{N_i(t),\ t\ge0\}$, $i=1,2$ are independent one parameter Poisson processes with rates $\lambda_i>0$, $i=1,2$, respectively. Its distribution is given by
\begin{equation*}
	\mathrm{Pr}\{\tilde{S}(t)=n\}=e^{-(\lambda_1+\lambda_2)t}\bigg(\frac{\lambda_1}{\lambda_2}\bigg)^{n/2}I_{|n|}(2\sqrt{\lambda_1\lambda_2}t),\ n\in\mathbb{Z},\ t\ge0.
\end{equation*} 

\subsection{Integral of GMSP} The path integrals of Markov processes find applications in variety of fields, such as in the study of genetic population (see \cite{Dhillon2025}), service and parking management system (see \cite{Vishwakarma2024a}), \textit{etc.}. In recent times, the Riemann-Liouville fractional integrals of one parameter point processes have been studied by many authors (see \cite{Dhillon2025, Kataria2025, Orsingher2013, Vishwakarma2024b, Vishwakarma2025b}, and reference therein). The integral of one parameter generalized Skellam process is introduced and studied in \cite{Cinque2025}. Moreover, some integrals of multiparameter Poisson processes are discussed in \cite{Vishwakarma2025}.

 Let $\{N(\textbf{t}),\ \textbf{t}\in\mathbb{R}^M_+\}$ be a MPP with rate parameter $\textbf{0}\prec\boldsymbol{\Lambda}=(\lambda_1,\dots,\lambda_M)\in\mathbb{R}^M$. Its Riemann integral over rectangle $\prod_{k=1}^{M}[0,t_k]$ in $\mathbb{R}^M_+$ is defined as follows:
\begin{equation}\label{intmpp}
	\mathscr{N}(\textbf{t})\coloneqq\int_{0}^{t_1}\dots\int_{0}^{t_M}N(s_1,\dots,s_M)\,\mathrm{d}s_1\dots\mathrm{d}s_M,\ \textbf{t}=(t_1,\dots,t_M)\in\mathbb{R}^M_+.
\end{equation}
 Its mean is given by $\mathbb{E}\mathscr{N}(\textbf{t})=\sum_{k=1}^{M}\lambda_kt_k^2\prod_{l\ne k}t_l/2$, $\textbf{t}=(t_1,\dots,t_M)\in\mathbb{R}^M_+$.

In the following result, we obtain an explicit expression for the characteristic function of the integral (\ref{intmpp}). For an integral of one parameter L\'evy process a similar result is obtained in \cite{Xia2018}. 
\begin{theorem}\label{propint}
	The characteristic function of integral (\ref{intmpp}) is given by
	\begin{equation}\label{mppint}
		\mathbb{E}\exp(iu\mathscr{N}(\textbf{t}))=\exp\bigg(\sum_{k=1}^{M}t_{k}\lambda_{k}\int_{0}^{1}\bigg(\exp\bigg(iu\prod_{k=1}^{M}t_kx\bigg)-1\bigg)\,\mathrm{d}x\bigg),\ u\in\mathbb{R},\ \textbf{t}=(t_1,\dots,t_k)\in\mathbb{R}^M_+.
	\end{equation}
\end{theorem}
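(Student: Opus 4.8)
The plan is to reduce the multiparameter integral to a sum of one-parameter integrals via the distributional decomposition in Remark~\ref{rem21}, and then compute the characteristic function of each one-parameter piece directly. By Remark~\ref{rem21} (applied to a single MPP, i.e.\ with $\mathcal J=\{1\}$), there are independent one-parameter Poisson processes $N_1,\dots,N_M$ with rates $\lambda_1,\dots,\lambda_M$ such that $N(\mathbf s)\overset{d}{=}\sum_{k=1}^M N_k(s_k)$ as processes. First I would argue that this distributional identity may be taken jointly over the whole rectangle (so that the Riemann integrals agree in law), whence
\[
\mathscr N(\mathbf t)\overset{d}{=}\int_0^{t_1}\!\!\cdots\!\int_0^{t_M}\sum_{k=1}^M N_k(s_k)\,\mathrm ds_1\cdots\mathrm ds_M
=\sum_{k=1}^M\Big(\prod_{l\ne k}t_l\Big)\int_0^{t_k}N_k(s_k)\,\mathrm ds_k,
\]
using Fubini on each summand. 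Set $\mathscr N_k\coloneqq\int_0^{t_k}N_k(s)\,\mathrm ds$; by independence of the $N_k$ the characteristic function of $\mathscr N(\mathbf t)$ factors as $\prod_{k=1}^M \mathbb E\exp\!\big(iu(\prod_{l\ne k}t_l)\mathscr N_k\big)$.

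The core computation is therefore the characteristic function of the integrated one-parameter Poisson process $\mathscr N_k$. Here I would condition on the jump times of $N_k$: given $N_k(t_k)=m$, the ordered jump times $(U_{(1)},\dots,U_{(m)})$ are distributed as the order statistics of $m$ i.i.d.\ Uniform$(0,t_k)$ variables, and $\int_0^{t_k}N_k(s)\,\mathrm ds=\sum_{i=1}^m (t_k-U_{(i)})\overset{d}{=}\sum_{i=1}^m(t_k-U_i)$ for i.i.d.\ uniforms $U_i$. Hence
\[
\mathbb E e^{iv\mathscr N_k}
=\sum_{m\ge0}e^{-\lambda_k t_k}\frac{(\lambda_k t_k)^m}{m!}\Big(\tfrac1{t_k}\int_0^{t_k}e^{iv(t_k-s)}\,\mathrm ds\Big)^m
=\exp\!\Big(\lambda_k\!\int_0^{t_k}\!\big(e^{iv(t_k-s)}-1\big)\,\mathrm ds\Big),
\]
and the substitution $s\mapsto t_k(1-x)$ turns $\int_0^{t_k}(e^{iv(t_k-s)}-1)\,\mathrm ds$ into $t_k\int_0^1(e^{ivt_kx}-1)\,\mathrm dx$. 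Plugging $v=u\prod_{l\ne k}t_l$ gives the $k$-th factor $\exp\big(t_k\lambda_k\int_0^1(e^{iu(\prod_{l}t_l)x}-1)\,\mathrm dx\big)$, and multiplying over $k$ yields exactly \eqref{mppint}. (As a sanity check, this also reproduces the stated mean $\mathbb E\mathscr N(\mathbf t)=\sum_k\lambda_k t_k^2\prod_{l\ne k}t_l/2$ upon differentiating in $u$ at $0$.)

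The main obstacle is the first step: justifying that the \emph{pointwise} equality in distribution $N(\mathbf s)\overset{d}{=}\sum_k N_k(s_k)$ upgrades to an equality of the integrated functionals. One clean way is to note that both sides are, as functions of $\mathbf s$, right-continuous and nondecreasing in each coordinate with the same finite-dimensional distributions (this is exactly the content of the construction behind Remark~\ref{rem21}), so by a monotone-class / Kolmogorov-extension argument there is a common probability space on which the two random fields coincide a.s.\ on a dense countable set and hence, by monotonicity, at every continuity point; since a counting field has only countably many discontinuity surfaces (a Lebesgue-null set), the two Riemann integrals over $\prod_k[0,t_k]$ agree a.s., giving the required distributional identity for $\mathscr N(\mathbf t)$. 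Alternatively, one can bypass this entirely by computing $\mathbb E\exp(iu\mathscr N(\mathbf t))$ directly: discretize the rectangle into a grid, use the independent-rectangular-increment structure of the MPP (properties (i)--(ii)) to write the Riemann sum's characteristic function as a product over grid cells, expand $\log(1+x)\sim x$ as in the proof of Proposition~2.1, and pass to the limit; this route avoids coupling but requires a uniform-integrability estimate to interchange limit and expectation. I would present the first route as the main argument and remark on the second.
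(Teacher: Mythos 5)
Your proof is correct, but it follows a genuinely different route from the paper's. The paper works directly with the multiparameter Riemann sums: it performs an Abel--type summation by parts in each coordinate, uses the stationary and independent increments of the MPP to rewrite the Riemann sum in distribution as a weighted sum of independent Poisson random variables of the form $\frac{t_{k'}}{r_{k'}}\sum_{l_{k'}}l_{k'}N(0,\dots,t_{k'}/r_{k'},\dots,0)$, and then recognizes the logarithm of the resulting product of characteristic functions as a Riemann sum for $\int_0^1$. You instead invoke the decomposition $N(\mathbf{s})\overset{d}{=}\sum_{k}N_k(s_k)$ of Remark \ref{rem21} at the level of finite-dimensional distributions, reduce by Fubini to independent one-parameter integrated Poisson processes, and compute each factor via the classical order-statistics representation of the jump times, $\int_0^{t}N_k(s)\,\mathrm{d}s=\sum_{i}(t-\tau_i)$. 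Both arguments are valid and yield \eqref{mppint}; yours is shorter and reuses standard one-parameter facts, and it is notable that the paper itself records your integral decomposition only afterwards, as a consequence of the theorem in Remark \ref{remint}, so you are running that implication in the opposite direction. The one step that needs care --- upgrading the equality of finite-dimensional distributions to an equality in law of the Riemann integrals --- is handled correctly by your coupling argument, though it can be stated more economically: the Riemann sums over any fixed grid are measurable functions of finitely many field values, hence equidistributed for the two fields, and they converge almost surely since a bounded coordinatewise nondecreasing function is Riemann integrable; the limits therefore share a law. Your sketched second alternative (discretize, use independent rectangular increments, expand $\ln(1+x)\sim x$) is essentially the paper's actual proof.
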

\begin{proof}
For $\textbf{t}=(t_1,\dots,t_M)\in\mathbb{R}^M_+$, we have
\begin{align*}
	\int_{0}^{t_1}\dots&\int_{0}^{t_M}N(s_1,\dots,s_M)\,\mathrm{d}s_1\dots\mathrm{d}s_M\\
	&=\lim_{r_1\longrightarrow\infty}\dots\lim_{r_M\longrightarrow\infty}\bigg(\prod_{k=1}^{M}\frac{t_k}{r_k}\bigg)\sum_{l_1=1}^{r_1}\dots\sum_{l_M}^{r_M}N(t_1l_1/r_1,\dots,t_Ml_M/r_M)\\
	&=\lim_{r_1\longrightarrow\infty}\dots\lim_{r_M\longrightarrow\infty}\bigg(\prod_{k=1}^{M}\frac{t_k}{r_k}\bigg)\sum_{l_1=1}^{r_1}\dots\sum_{l_M}^{r_M}\{(r_1-l_1+1)(N(t_1l_1/r_1,\dots,t_Ml_M/r_M)\\
	&\hspace{3cm}-N(t_1(l_1-1)/r_1,\dots,t_Ml_M/r_M))+N(0,t_2l_2/r_2,\dots,t_Ml_M/r_M)\}\\
	&\overset{d}{=}\lim_{r_1\longrightarrow\infty}\dots\lim_{r_M\longrightarrow\infty}\bigg(\prod_{k=1}^{M}\frac{t_k}{r_k}\bigg)\sum_{l_1=1}^{r_1}\dots\sum_{l_M}^{r_M}(r_1-l_1+1)N(t_1/r_1,0,\dots,0)\\
	&\ \ +t_1\lim_{r_2\longrightarrow\infty}\dots\lim_{r_M\longrightarrow\infty}\bigg(\prod_{k=2}^{M}\frac{t_k}{r_k}\bigg)\sum_{l_2=1}^{r_2}\dots\sum_{l_M}^{r_M}N(0,t_2l_2/r_2,\dots,t_Ml_M/r_M)\\
	&\overset{d}{=}\prod_{k=2}^{M}t_k\lim_{r_1\longrightarrow\infty}\frac{t_1}{r_1}\sum_{l_1=1}^{r_1}l_1N(t_1/r_1,0,\dots,0)\\
	&\ \ +t_1\lim_{r_2\longrightarrow\infty}\dots\lim_{r_M\longrightarrow\infty}\bigg(\prod_{k=2}^{M}\frac{t_k}{r_k}\bigg)\sum_{l_2=1}^{r_2}\dots\sum_{l_M}^{r_M}(r_2-l_2+1)N(0,t_2/r_2,0,\dots,0)\\
	&\ \ +t_1t_2\lim_{r_3\longrightarrow\infty}\dots\lim_{r_M\longrightarrow\infty}\bigg(\prod_{3=2}^{M}\frac{t_k}{r_k}\bigg)\sum_{l_3=1}^{r_2}\dots\sum_{l_M}^{r_M}N(0,0,t_3l_3/r_3,\dots,t_Ml_M/r_M)\\
	&\overset{d}{=}\prod_{k=2}^{M}t_k\lim_{r_1\longrightarrow\infty}\frac{t_1}{r_1}\sum_{l_1=1}^{r_1}l_1N(t_1/r_1,0,\dots,0)+\prod_{k=1, k\ne2}^{M}t_k\lim_{r_2\longrightarrow\infty}\frac{t_2}{r_2}\sum_{l_2=1}^{r_2}l_2N(0,t_2/r_2,0,\dots,0)\\
	&\ \ +t_1t_2\lim_{r_3\longrightarrow\infty}\dots\lim_{r_M\longrightarrow\infty}\bigg(\prod_{3=2}^{M}\frac{t_k}{r_k}\bigg)\sum_{l_3=1}^{r_2}\dots\sum_{l_M}^{r_M}N(0,0,t_3l_3/r_3,\dots,t_Ml_M/r_M)\\
	&\ \ \vdots\\
	&\overset{d}{=}\sum_{k'=1}^{M}\prod_{k=1,k\ne k'}^{M}t_k\lim_{k'\longrightarrow\infty}\frac{t_{k'}}{r_{k'}}\sum_{l_{k'}=1}^{r_{k'}}l_{k'}N(0,\dots,0,t_{k'}/r_{k'},0,\dots,0),
\end{align*}
where we have used independent and stationary increments properties of MPP. Also, we note that $N(0,\dots,0,t_{k'}/r_{k'},0,\dots,0)$ are independent here because these are appearing due to independent increments of MPP. Hence,
\begin{align*}
	\mathbb{E}\exp\bigg(iu\int_{0}^{t_1}\dots&\int_{0}^{t_M}N(s_1,\dots,s_M)\,\mathrm{d}s_1\dots\mathrm{d}s_M\bigg)\\
	&=\prod_{k'=1}^{M}\lim_{k'\longrightarrow\infty}\prod_{l_{k'}=1}^{r_{k'}}\mathbb{E}\exp\bigg(iu\prod_{k=1}^{M}t_kl_{k'}/r_{k'}N(0,\dots,0,t_{k'}/r_{k'},0,\dots,0)\bigg),\ u\in\mathbb{R},\\
	&=\prod_{k'=1}^{M}\exp\bigg(\lim_{k'\longrightarrow\infty}\sum_{l_{k'}=1}^{r_{k'}}\frac{t_{k'}}{r_{k'}}\ln\mathbb{E}\exp\bigg(iu\prod_{k=1}^{M}t_kl_{k'}/r_{k'}N^{(k')}(0,\dots,0,1,0,\dots,0)\bigg)\bigg)\\
	&=\prod_{k'=1}^{M}\exp\bigg(\lim_{k'\longrightarrow\infty}\sum_{l_{k'}=1}^{r_{k'}}\frac{t_{k'}}{r_{k'}}\ln\exp\bigg(\lambda_{k'}\bigg(\exp\bigg(iu\prod_{k=1}^{M}t_kl_{k'}/r_{k'}\bigg)-1\bigg)\bigg)\bigg)\\
	&=\prod_{k'=1}^{M}\exp\bigg(t_{k'}\lambda_{k'}\int_{0}^{1}\bigg(\exp\bigg(iu\prod_{k=1}^{M}t_kx\bigg)-1\bigg)\,\mathrm{d}x\bigg),
\end{align*}
where $N^{(k')}(0,\dots,0,1,0,\dots,0)$ is a Poisson random variable with rate $\lambda_{k'}>0$ for each $k'=1,\dots,M$.
This completes the proof.
\end{proof}
\begin{remark}\label{remint}
	Note that a similar result, as in (\ref{mppint}), can be obtained for general multiparameter L\'evy process. Let $\{Y(\textbf{t}),\ \textbf{t}\in\mathbb{R}^M_+\}$ be a multiparameter L\'evy process in the sense of \cite{Pedersen2003, Pedersen2004a}. Then, from Proposition 2.7 of \cite{Iafrate2024}, it follows that there exist independent one parameter L\'evy processes $\{Y_k(t),\ t\ge0\}$, $k=1,\dots,M$, defined as $Y_k(t)\overset{d}{=}Y(t\textbf{e}_k)$ such that $Y(\textbf{t})\overset{d}{=}Y_1(t_1)+\dots+Y_M(t_M)$, $\textbf{t}=(t_1,\dots,t_M)\in\mathbb{R}^M_+$. Here, for each  $k=1,\dots,M$, $\textbf{e}_k$ is a  unit vector in $\mathbb{R}^M_+$ whose $k$th entry is $1$ and zero elsewhere. 
	
	Let $\phi_k$ denote the characteristic function of $Y_k(1)$ for each $k=1,\dots,M$. Then, following the proof of Theorem \ref{propint}, the characteristic function of the Riemann integral of $Y(\textbf{s})$ over rectangle $\prod_{k=1}^{M}[0,t_k]$ is given by
	\begin{equation}\label{lpint}
		\mathbb{E}\exp\bigg(iu\int_{0}^{t_1}\dots\int_{0}^{t_M}Y(s_1,\dots,s_M)\,\mathrm{d}s_1\dots\,\mathrm{d}s_M\bigg)=\exp\bigg(\sum_{k=1}^{M}t_k\int_{0}^{1}\ln\phi_k\bigg(u\prod_{k=1}^{M}t_kx\bigg)\,\mathrm{d}x\bigg),\ u\in\mathbb{R}.
	\end{equation}

For the case $M=1$, if $\{Y(t),\ t\ge0\}$ is a one parameter L\'evy process then the characteristic function of its Riemann integral is $\mathbb{E}\exp\big(iu\int_{0}^{t}Y(s)\,\mathrm{d}s\big)=\exp\big(t\int_{0}^{1}\ln\phi(utx)\,\mathrm{d}x\big)$, $u\in\mathbb{R}$, where $\phi$ is the characteristic function of $Y(1)$  (see \cite{Xia2018}). Thus, from (\ref{lpint}), we get
\begin{align*}
	\int_{0}^{t_1}\dots\int_{0}^{t_M}Y(s_1,\dots,s_M)\,\mathrm{d}s_1\dots\,\mathrm{d}s_M&\overset{d}{=}\sum_{k=1}^{M}\prod_{k'=1,k'\ne k}^{M}t_k\int_{0}^{t_k}Y_k(s_k)\,\mathrm{d}s_k\\
	&=\int_{0}^{t_1}\dots\int_{0}^{t_M}\sum_{k=1}^{M}Y_k(s_k)\,\mathrm{d}s_k,\ (t_1,\dots,t_M)\in\mathbb{R}^M_+.
\end{align*}
In particular, in case of MPP $Y_k$'s are independent one parameter Poisson processes with parameters $\lambda_k$'s. Hence, $\phi_k(u)=\exp(\lambda_k(e^{iu}-1))$, $u\in\mathbb{R}$ for each $k=1,\dots,M$.
\end{remark}

\begin{proposition}\label{intcmp}
	Let $\{\mathcal{N}(\textbf{t}),\ \textbf{t}\in\mathbb{R}^M_+\}$ be a multiparameter process defined by $\mathcal{N}(\textbf{t})\coloneqq\sum_{r=1}^{N(\textbf{t})}X_r$, where $X_1,X_2,\dots$ are iid random variables and $\{N(\textbf{t}),\ \textbf{t}\in\mathbb{R}^M_+\}$ is a MPP which is independent of $\{X_r\}_{r\ge1}$. Then, the following holds:
	\begin{equation}
		\int_{0}^{t_1}\dots\int_{0}^{t_M}\mathcal{N}(s_1,\dots,s_M)\,\mathrm{d}s_1,\dots,\,\mathrm{d}s_M\overset{d}{=}\prod_{k=1}^{M}t_k\sum_{r=1}^{N(\textbf{t})}X_rU_r,
	\end{equation}
	where $U_r$'s are iid $Uniform\,(0,1)$ that are independent of $X_r$'s and $\{N(\textbf{t}),\ \textbf{t}\in\mathbb{R}^M_+\}$.
\end{proposition}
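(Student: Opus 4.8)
The plan is to identify the laws of the two sides by computing their characteristic functions and then invoking uniqueness; write $T\coloneqq\prod_{k=1}^{M}t_k$ and let $\phi_X(v)\coloneqq\mathbb{E}e^{ivX_1}$ denote the common characteristic function of the $X_r$'s. First I would pin down the characteristic function of the left-hand side using the tools already at hand. As observed in the remark following Proposition \ref{cpprep}, the multiparameter compound Poisson process $\{\mathcal{N}(\textbf{t}),\ \textbf{t}\in\mathbb{R}^M_+\}$ is a multiparameter L\'evy process in the sense of \cite{Pedersen2003, Pedersen2004a}, so Remark \ref{remint} and formula (\ref{lpint}) apply to it. Its one parameter component $\mathcal{N}(t\textbf{e}_k)$ is a one parameter compound Poisson process with rate $\lambda_k$ and jump law that of $X_1$ (immediate upon conditioning on $N(t\textbf{e}_k)$ and using $\mathbb{E}u^{N(t\textbf{e}_k)}=e^{\lambda_k t(u-1)}$), hence its characteristic function at time $1$ is $\phi_k(v)=\exp\big(\lambda_k(\phi_X(v)-1)\big)$ and $\ln\phi_k(v)=\lambda_k(\phi_X(v)-1)$. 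Substituting into (\ref{lpint}) gives
\begin{equation*}
	\mathbb{E}\exp\bigg(iu\int_{0}^{t_1}\dots\int_{0}^{t_M}\mathcal{N}(s_1,\dots,s_M)\,\mathrm{d}s_1\dots\mathrm{d}s_M\bigg)=\exp\bigg(\Big(\sum_{k=1}^{M}\lambda_k t_k\Big)\int_{0}^{1}\big(\phi_X(uTx)-1\big)\,\mathrm{d}x\bigg),\quad u\in\mathbb{R}.
\end{equation*}

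Next I would compute the characteristic function of $\prod_{k=1}^{M}t_k\sum_{r=1}^{N(\textbf{t})}X_rU_r$ directly. Conditioning on $N(\textbf{t})$, and using that the $U_r$'s are iid $Uniform\,(0,1)$ independent of the $X_r$'s and of $\{N(\textbf{t}),\ \textbf{t}\in\mathbb{R}^M_+\}$, together with the generating-function identity $\mathbb{E}z^{N(\textbf{t})}=e^{\boldsymbol{\Lambda}\cdot\textbf{t}\,(z-1)}$ for $N(\textbf{t})\sim\mathrm{Poisson}(\boldsymbol{\Lambda}\cdot\textbf{t})$ (which holds for $|z|\le1$), I get
\begin{equation*}
	\mathbb{E}\exp\bigg(iuT\sum_{r=1}^{N(\textbf{t})}X_rU_r\bigg)=\mathbb{E}\Big(\mathbb{E}e^{iuTX_1U_1}\Big)^{N(\textbf{t})}=\exp\Big(\boldsymbol{\Lambda}\cdot\textbf{t}\,\big(\mathbb{E}e^{iuTX_1U_1}-1\big)\Big),\quad u\in\mathbb{R}.
\end{equation*}
Since the integrand has modulus at most $1$, Fubini's theorem gives $\mathbb{E}e^{iuTX_1U_1}=\int_{0}^{1}\mathbb{E}e^{iuTX_1x}\,\mathrm{d}x=\int_{0}^{1}\phi_X(uTx)\,\mathrm{d}x$, and since $\boldsymbol{\Lambda}\cdot\textbf{t}=\sum_{k=1}^{M}\lambda_k t_k$ this matches the formula displayed above. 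Because a characteristic function determines the law of a real-valued random variable, the two integrals coincide in distribution, which is the assertion.

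I do not anticipate a real obstacle: once the left-hand side is identified, the rest is a routine conditioning-and-Fubini computation. The one point deserving care is checking that $\mathcal{N}$ genuinely meets the hypotheses of Remark \ref{remint} and that its coordinate marginals $\mathcal{N}(t\textbf{e}_k)$ are exactly compound Poisson of rate $\lambda_k$. If one prefers to avoid Remark \ref{remint} altogether, the characteristic function of the left-hand side can be recovered from scratch by rerunning the Riemann-sum manipulation in the proof of Theorem \ref{propint}, with each Poisson increment $N(\,\cdot\,)$ there replaced by the corresponding compound Poisson increment of $\mathcal{N}$ and $\exp(\lambda_{k'}(e^{iv}-1))$ replaced by its characteristic function; either route yields the displayed formula, after which the argument finishes as above.
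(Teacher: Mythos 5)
Your proposal is correct and follows essentially the same route as the paper: both identify $\mathcal{N}$ as a multiparameter L\'evy process, apply formula (\ref{lpint}) with $\ln\phi_k(v)=\lambda_k(\psi(v)-1)$, and recognize $\int_{0}^{1}\psi\big(u\prod_k t_k x\big)\,\mathrm{d}x$ as $\mathbb{E}\exp\big(iu\prod_k t_k X_1U_1\big)$. If anything, yours is slightly more complete, since you explicitly compute the characteristic function of the right-hand side by conditioning on $N(\textbf{t})$ (a step the paper leaves implicit) and you correctly retain the $-1$ that is dropped in the paper's final display.
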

\begin{proof}
	We note that $\{\mathcal{N}(\textbf{t}),\ \textbf{t}\in\mathbb{R}^M_+\}$ is  a multiparameter L\'evy process in the sense of \cite{Pedersen2003,Pedersen2004a}. Let $\{N_k(t),\ t\ge0\}$, $k=1,\dots,M$ be independent one parameter Poisson processes with rates $\lambda_k$, $k=1,\dots,M$, respectively. We assume that $N_k(t)$'s are independent of $X_r$'s. If $\psi(u)=\mathbb{E}e^{iuX_1}$, $u\in\mathbb{R}$, then by using (\ref{lpint}), we get
	\begin{align*}
		\mathbb{E}\exp\bigg(iu\int_{0}^{t_1}\dots\int_{0}^{t_M}\mathcal{N}(s_1,\dots,s_M)&\,\mathrm{d}s_1,\dots,\,\mathrm{d}s_M\bigg)\\
		&=\prod_{k=1}^{M}\exp\bigg(t_k\int_{0}^{1}\ln\mathbb{E}\exp\bigg(iu\prod_{k=1}^{M}t_kx\sum_{r=1}^{N_k(1)}X_r\bigg)\,\mathrm{d}x\bigg)\\
		&=\prod_{k=1}^{M}\exp\bigg(t_k\int_{0}^{1}\ln\mathbb{E}\psi\bigg(u\prod_{k=1}^{M}t_kx\bigg)^{N_k(1)}\,\mathrm{d}x\bigg)\\
		&=\prod_{k=1}^{M}\exp\bigg(\lambda_kt_k\int_{0}^{1}\bigg(\psi\bigg(u\prod_{k=1}^{M}t_kx\bigg)-1\bigg)\,\mathrm{d}x\bigg)\\
		&=\prod_{k=1}^{M}\exp\bigg(\lambda_kt_k\mathbb{E}\exp\bigg(iu\prod_{k=1}^{M}t_kX_1U_1\bigg)\bigg),\ u\in\mathbb{R}.
	\end{align*}
	This completes the proof.
\end{proof}

Let $S=\{S(\textbf{t}),\ \textbf{t}\in\mathbb{R}^M_+\}$ be the GMSP, that is, $S\sim GMSP\{\boldsymbol{\Lambda}_j\}_{j\in\mathcal{J}}$, where $\textbf{0}\prec\boldsymbol{\Lambda}_j=(\lambda^{(j)}_1,\dots,\lambda^{(j)}_M)$ for each $j\in\mathcal{J}$. Then, in view of Remark \ref{rem21} and Remark \ref{remint}, its Riemann integral over rectangle $\prod_{k=1}^{M}[0,t_k]$ satisfies the following equality:
 \begin{equation*}
	\int_{0}^{t_1}\dots\int_{0}^{t_M}S(s_1,\dots,s_M)\,\mathrm{d}s_1,\dots,\mathrm{d}s_M\overset{d}{=}\sum_{k=1}^{M}\prod_{k'=1,k'\ne k}^{M}t_k\int_{0}^{t_k}S_k(s_k)\,\mathrm{d}s_k\\,\ \textbf{t}=(t_1,\dots,t_M)\in\mathbb{R}^M_+,
\end{equation*}
where $S_k=\{S_k(t),\ t\ge0\}$ are independent GSP, that is, $S_k\sim GSP\{\lambda^{(j)}_k\}_{j\in\mathcal{J}}$, $k=1,\dots,M$. In \cite{Cinque2025}, p. 20, it is shown that $\int_{0}^{t}S_k(s)\,\mathrm{d}s\overset{d}{=}t\sum_{r=1}^{N_k(t)}X_r^{(k)}U_r$, where $X_1^{(k)}, X_2^{(k)},\dots$ are iid random variables with common probability mass function $\mathrm{Pr}\{X_1^{(k)}=j\}=\lambda^{(j)}_k/\sum_{j\in\mathcal{J}}\lambda_{k}^{(j)}$, $j\in\mathcal{J}$, $\{N_k(t),\ t\ge0\}$ is a Poisson process with parameter $\sum_{j\in\mathcal{J}}\lambda_k^{(j)}$, and $U_r$'s are independent $Uniform\,(0,1)$ random variables. Also, all the processes are independent of each other. Therefore,
\begin{equation}\label{f3}
	\int_{0}^{t_1}\dots\int_{0}^{t_M}S(s_1,\dots,s_M)\,\mathrm{d}s_1,\dots,\mathrm{d}s_M\overset{d}{=}\prod_{k=1}^{M}t_k\sum_{k=1}^{M}\sum_{r_k=1}^{N_k(t_k)}X_{r_k}^{(k)}U_{r_k}\\,\ \textbf{t}=(t_1,\dots,t_M)\in\mathbb{R}^M_+.
\end{equation}
In particular, if $\lambda_k^{(j)}=\lambda^{(j)}>0$ for each $k=1,\dots,M$, then from Proposition \ref{cpprep} and Proposition \ref{intcmp}, it follows that 
\begin{equation}\label{f2}
	\int_{0}^{t_1}\dots\int_{0}^{t_M}S(s_1,\dots,s_M)\,\mathrm{d}s_1,\dots,\mathrm{d}s_M\overset{d}{=}\prod_{k=1}^{M}t_k\sum_{r=1}^{N(\textbf{t})}X_rU_r,
\end{equation}
where $\{N(\textbf{t}),\ \textbf{t}\in\mathbb{R}^M_+\}$ is a MPP with rate parameter $\textbf{0}\prec\boldsymbol{\Lambda}=(\sum_{j\in\mathcal{J}}\lambda^{(j)},\dots,\sum_{j\in\mathcal{J}}\lambda^{(j)})\in\mathbb{R}^M$, $\{X_r\}_{r\ge1}$ is a sequence of iid random variables with common distribution $\mathrm{Pr}\{X_1=j\}=\lambda^{(j)}/\sum_{j\in\mathcal{J}}\lambda^{(j)}$, $j\in\mathcal{J}$, and $\{U_r\}_{r\ge1}$ is sequence of iid $Uniform\,(0,1)$ random variables. Also, all these collections are assume to be mutually independent. 
\begin{remark}
	Let $\{N_k(t),\ t\ge0\}$, $k=1,\dots,M$ be one parameter Poisson processes with rates $\lambda_k$, $k=1,\dots,M$, respectively. Let $\{X_r^{(k)}\}_{r\ge1}$, $k=1,\dots,M$ be identical sequences of iid random variables. It is assumed that all these collections are mutually independent. Let $\{X_r\}_{r\ge1}$ be a sequence of iid random variables such that $X_1\overset{d}{=}X_1^{(1)}$, and it is independent of a MPP $\{N(\textbf{t}),\ \textbf{t}\in\mathbb{R}^M_+\}$ with rate parameter $\textbf{0}\prec(\lambda_1,\dots,\lambda_k)$. 	If $\psi(u)=\mathbb{E}e^{iuX_1}$, $u\in\mathbb{R}$ is the characteristic function of $X_1$ then 
	\begin{equation*}
		\mathbb{E}\exp\bigg(iu\sum_{k=1}^{M}\sum_{r_k=1}^{N_k(t_k)}X^{(k)}_{r_k}\bigg)=\mathbb{E}(\phi(u))^{\sum_{k=1}^{M}N_k(t_k)}=\mathbb{E}(\phi(u))^{N(\textbf{t})},\ u\in\mathbb{R},\ \textbf{t}=(t_1,\dots,t_M)\in\mathbb{R}^M_+,
	\end{equation*} 
	as $N(\textbf{t})\overset{d}{=}\sum_{k=1}^{M}N_k(t_k)$.
	Thus, we get the following equality:
$\sum_{k=1}^{M}\sum_{r_k=1}^{N_k(t_k)}X^{(k)}_{r_k}\overset{d}{=}\sum_{r=1}^{N(\textbf{t})}X_r$. As a consequence of this, the equality (\ref{f2}) can be established alternatively by using (\ref{f3}).
\end{remark}

\section{A different version of multiparameter Skellam process}  For a finite subset $\mathcal{J}\in\mathbb{R}-\{0\}$, let $\{N_j(t),\ t\ge0\}$, $j\in\mathcal{J}$ be independent one parameter Poisson processes with rates $\lambda_j$, $j\in\mathcal{J}$, respectively. Let us consider a multiparameter process $\mathcal{S}=\{\mathcal{S}(\textbf{t}),\ \textbf{t}\in\mathbb{R}^{\#\mathcal{J}}_+\}$ defined as follows:
\begin{equation}\label{msp2}
	\mathcal{S}(\textbf{t})\coloneqq\sum_{j\in\mathcal{J}}jN_j(t_j),\ \textbf{t}=(t_j,\ j\in\mathcal{J})\in\mathbb{R}^{\#\mathcal{J}}_+,
\end{equation}
where $\#\mathcal{J}$ denotes the cordinality of set $\mathcal{J}$.  For $\textbf{s}=(s_j,\ j\in\mathcal{J})$ and $\textbf{t}=(t_j,\ j\in\mathcal{J})$ in $\mathbb{R}^{\#\mathcal{J}}_+$, its mean, variance and auto covariance are given by $\mathbb{E}\mathcal{S}(\textbf{t})=\sum_{j\in\mathcal{J}}j\lambda_jt_j$, $\mathbb{V}\mathrm{ar}\mathcal{S}(\textbf{t})=\sum_{j\in\mathcal{J}}j^2\lambda_jt_j$ and $\mathbb{C}\mathrm{ov}(\mathcal{S}(\textbf{s}),\mathcal{S}(\textbf{t}))=\sum_{j\in\mathcal{J}}j^2\min\{s_j,t_j\}$, respectively. Also, its pgf is given by
\begin{equation*}
	\mathbb{E}u^{\mathcal{S}(\textbf{t})}=\prod_{j\in\mathcal{J}}\mathbb{E}u^{jN_j(t_j)}=\exp\bigg(\sum_{j\in\mathcal{J}}\lambda_jt_j(u^j-1)\bigg),\ \textbf{t}=(t_j,\ j\in\mathcal{J}),\ 0<u\leq1.
\end{equation*}

When $t_j=t$ for all $j\in\mathcal{J}$, the multiparameter process defined by (\ref{msp2}) reduces to the one parameter generalized Skellam process introduced and studied in \cite{Cinque2025}.
\begin{remark}
	For $\textbf{s}=(s_j,\ j\in\mathcal{J})$ and $\textbf{t}=(t_j,\ j\in\mathcal{J})$ in $\mathbb{R}^{\#\mathcal{J}}_+$ such that $\textbf{s}\preceq\textbf{t}$, the increment of \{$\mathcal{S}(\textbf{t}), \textbf{t}\in\mathbb{R}^{\#\mathcal{J}}_+\}$ is given by $\mathcal{S}(\textbf{t})-\mathcal{S}(\textbf{s})=\sum_{j\in\mathcal{J}}j(N_j(t_j)-N_j(s_j))$. Thus, by using the independence of $\{N_j(t),\ t\ge0\}$'s and their stationary and independent increments properties, the \{$\mathcal{S}(\textbf{t}), \textbf{t}\in\mathbb{R}^{\#\mathcal{J}}_+\}$ has stationary and independent increments. The characteristic function of its increment is given by
	\begin{equation}\label{diffcf3}
		\mathbb{E}\exp(iz(\mathcal{S}(\textbf{t})-\mathcal{S}(\textbf{s})))=\prod_{j\in\mathcal{J}}\mathbb{E}\exp(izjN_j(t_j-s_j))=\exp\bigg(\sum_{j\in\mathcal{J}}\lambda_j(t_j-s_j)(e^{izj}-1)\bigg),\ z\in\mathbb{R}.
	\end{equation}
\end{remark}
\begin{theorem}\label{thm31}
	Let $\{\alpha_k\}_{k\ge1}$ be a sequence of non-negative real numbers such that $\alpha_k\longrightarrow\infty$ as $k\longrightarrow\infty$. For $j\in\mathcal{J}$, let $p_{l_j,j'}^{(k)}\in(0,1)$ for all $l_j\ge1$, $k\ge1$, $j'\in\mathcal{J}$ such that $\sum_{j'\in\mathcal{J}}p_{l_j,j'}^{(k)}<1$, and  $\{X_{l_j}^{(k)}\}_{l_j\ge1}$ be independent sequences of independent random variables such that
	\begin{equation}\label{xdef}
		X_{l_j}^{(k)}=\begin{cases}
		 j'\in\mathcal{J},\ \text{with probability $p_{l_j,j'}^{(k)}$},\\
		 0,\ \text{with probability 1-$\sum_{j'\in\mathcal{J}}p_{l_j,j'}^{(k)}$}.
		\end{cases}
	\end{equation}
	
	Let $\{U_k(\textbf{t}), \textbf{t}\in\mathbb{R}^{\#\mathcal{J}}_+\}$, $k\ge1$ be a multiparameter process defined as
	\begin{equation*}
		U_k(\textbf{t})=\sum_{j\in\mathcal{J}}\sum_{l_j=1}^{[\alpha_kt_j]}X_{l_j}^{(k)},\ \textbf{t}=(t_j,\ j\in\mathcal{J})\in\mathbb{R}^{\#\mathcal{J}}_+.
	\end{equation*}
	If for each $j\in\mathcal{J}$
	\begin{equation}\label{assm31}
		 \max_{0<l_j\leq\alpha_k}p_{l_j,j'}^{(k)}\longrightarrow0\ \text{for all $j'\in\mathcal{J}$}
	\end{equation}
	and for all $t\ge0$
	\begin{equation}\label{assm32}
		\sum_{l_j=1}^{[\alpha_kt]}p_{l_j,j'}^{(k)}\longrightarrow\begin{cases}
			\lambda_jt,\ j=j',\\
			0,\ j\ne j'
		\end{cases}
	\end{equation}
	as $k\longrightarrow\infty$,
	then for any points $\textbf{t}^1,\dots,\textbf{t}^m$ in $\mathbb{R}^{\#\mathcal{J}}_+$ such that $\textbf{t}^r\preceq\textbf{t}^{r+1}$, $r=1,\dots,m-1$, we have
	\begin{equation}\label{lim31}
		(U_k(\textbf{t}^1),\dots,U_k(\textbf{t}^m))\overset{d}{\longrightarrow}(\mathcal{S}(\textbf{t}^1),\dots,\mathcal{S}(\textbf{t}^m))\ \text{as}\ k\longrightarrow\infty.
	\end{equation}
\end{theorem}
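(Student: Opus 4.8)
The plan is to follow the pattern of the proof of the earlier Proposition: exploit the independent-increments structure of $\{U_k(\textbf{t})\}$ to reduce the finite-dimensional convergence \eqref{lim31} to convergence of a single increment, then compute the limiting characteristic function and match it with \eqref{diffcf3}.

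\emph{Step 1 (reduction to one increment).} For $\textbf{s}\preceq\textbf{t}$ in $\mathbb{R}^{\#\mathcal{J}}_+$ one has $U_k(\textbf{t})-U_k(\textbf{s})=\sum_{j\in\mathcal{J}}\sum_{l_j=[\alpha_k s_j]+1}^{[\alpha_k t_j]}X_{l_j}^{(k)}$. Since the whole array $\{X_{l_j}^{(k)}\}_{l_j\ge1,\,j\in\mathcal{J}}$ is independent (the $j$-indexed sequences being independent, and each being a sequence of independent variables), the increments of $\{U_k(\textbf{t})\}$ over the nested index blocks determined by $\textbf{t}^1\preceq\dots\preceq\textbf{t}^m$ are mutually independent. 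Expressing the joint characteristic function of $(U_k(\textbf{t}^1),\dots,U_k(\textbf{t}^m))$ through the increments $U_k(\textbf{t}^r)-U_k(\textbf{t}^{r-1})$ (with $\textbf{t}^0:=\textbf{0}$) factors it as a product over $r$; the analogous factorization holds for $(\mathcal{S}(\textbf{t}^1),\dots,\mathcal{S}(\textbf{t}^m))$ by the independent-increments property of $\mathcal{S}$ noted before the theorem. Hence it suffices to show $U_k(\textbf{t})-U_k(\textbf{s})\overset{d}{\longrightarrow}\mathcal{S}(\textbf{t})-\mathcal{S}(\textbf{s})$ for each fixed pair $\textbf{s}\preceq\textbf{t}$.

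\emph{Step 2 (characteristic function).} Using independence,
\[
\mathbb{E}e^{iz(U_k(\textbf{t})-U_k(\textbf{s}))}=\prod_{j\in\mathcal{J}}\prod_{l_j=[\alpha_k s_j]+1}^{[\alpha_k t_j]}\Big(1+\sum_{j'\in\mathcal{J}}p_{l_j,j'}^{(k)}(e^{izj'}-1)\Big),\quad z\in\mathbb{R}.
\]
Taking logarithms and applying $\ln(1+x)=x+O(|x|^2)$, the arguments are uniformly small because by \eqref{assm31}, $\big|\sum_{j'}p_{l_j,j'}^{(k)}(e^{izj'}-1)\big|\le 2\,\#\mathcal{J}\,\max_{j'\in\mathcal{J}}\max_{0<l_j\le\alpha_k}p_{l_j,j'}^{(k)}\to0$; moreover the accumulated quadratic remainder is at most this maximum times $\sum_{j\in\mathcal{J}}\sum_{l_j=1}^{[\alpha_k t_j]}\sum_{j'\in\mathcal{J}}p_{l_j,j'}^{(k)}$, which stays bounded by \eqref{assm32}, so the remainder vanishes. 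The surviving linear term equals $\sum_{j\in\mathcal{J}}\sum_{j'\in\mathcal{J}}(e^{izj'}-1)\sum_{l_j=[\alpha_k s_j]+1}^{[\alpha_k t_j]}p_{l_j,j'}^{(k)}$; splitting each block sum as $\sum_{l_j=1}^{[\alpha_k t_j]}-\sum_{l_j=1}^{[\alpha_k s_j]}$ and invoking \eqref{assm32} with $t=t_j$ and $t=s_j$, only the diagonal terms $j'=j$ survive, with limit $\lambda_j(t_j-s_j)$. Hence $\mathbb{E}e^{iz(U_k(\textbf{t})-U_k(\textbf{s}))}\to\exp\big(\sum_{j\in\mathcal{J}}\lambda_j(t_j-s_j)(e^{izj}-1)\big)$, which is exactly $\mathbb{E}e^{iz(\mathcal{S}(\textbf{t})-\mathcal{S}(\textbf{s}))}$ by \eqref{diffcf3}. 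The L\'evy continuity theorem gives the increment convergence, and Step 1 then yields \eqref{lim31}.

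\emph{Main obstacle.} The only point requiring real care is justifying the replacement of $\prod(1+x_{l_j})$ by $\exp(\sum x_{l_j})$, i.e.\ showing $\sum_{j}\sum_{l_j}\big|\sum_{j'}p_{l_j,j'}^{(k)}(e^{izj'}-1)\big|^2\to0$; this is where \eqref{assm31} is used essentially (not merely to make a Poisson limit plausible), in combination with the boundedness of the row sums $\sum_{l_j}p_{l_j,j'}^{(k)}$ extracted from \eqref{assm32}. It is also worth emphasizing that the off-diagonal half of \eqref{assm32}, namely $\sum_{l_j}p_{l_j,j'}^{(k)}\to0$ for $j\ne j'$, is precisely what decouples the $\#\mathcal{J}$ coordinate directions, so that the $j$th block contributes only a $jN_j$-type increment in the limit, matching \eqref{msp2}.
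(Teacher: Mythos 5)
Your proposal is correct and follows essentially the same route as the paper's proof: reduce \eqref{lim31} to convergence of a single increment via the independent-increments structure, compute the characteristic function of the increment, linearize the logarithm using \eqref{assm31}, and invoke \eqref{assm32} to recover \eqref{diffcf3}. The only difference is that you justify the replacement of $\prod(1+x_{l_j})$ by $\exp(\sum x_{l_j})$ more carefully (bounding the quadratic remainder), where the paper simply cites $\ln(1+x)\sim x$; this is a welcome tightening but not a different argument.
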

\begin{proof}
	For $\textbf{t}=(t_j,\ j\in\mathcal{J})$ and $\textbf{s}=(s_j,\ j\in\mathcal{J})$ in $\mathbb{R}^{\#\mathcal{J}}_+$ such that $\textbf{s}\preceq\textbf{t}$, the increment of $U_k(\textbf{t})$ is given by
	\begin{equation}\label{diffu}
		U_k(\textbf{t})-U_k(\textbf{s})=\sum_{j\in\mathcal{J}}\sum_{l_j=[\alpha_ks_j]+1}^{[\alpha_kt_j]}X_{l_j}^{(k)}.
	\end{equation}
	Thus, $\{U_k(\textbf{t}),\ \textbf{t}\in\mathbb{R}^{\#\mathcal{J}}_+\}$ has independent increments. Hence, to prove (\ref{lim31}), it is enough prove the convergence $U_k(\textbf{t})-U_k(\textbf{s})\overset{d}{\longrightarrow}\mathcal{S}(\textbf{t})-\mathcal{S}(\textbf{s})$ as $k\longrightarrow\infty$ whenever $\textbf{s}\preceq\textbf{t}$.
	
	By using (\ref{xdef}), the characteristic function of (\ref{diffu}) is given by
	\begin{align*}
		\mathbb{E}\exp(iz(U_k(\textbf{t})-U_k(\textbf{s})))&=\prod_{j\in\mathcal{J}}\prod_{l_j=[\alpha_ks_j]+1}^{[\alpha_kt_j]}\mathbb{E}\exp(izX_{l_j}^{(k)}),\ z\in\mathbb{R},\\
		&=\prod_{j\in\mathcal{J}}\prod_{l_j=[\alpha_ks_j]+1}^{[\alpha_kt_j]}\bigg(\sum_{j'\in\mathcal{J}}e^{izj'}p_{l_j,j'}^{(k)}+1-\sum_{j'\in\mathcal{J}}p_{l_j,j'}^{(k)}\bigg)\\
		&=\prod_{j\in\mathcal{J}}\prod_{l_j=[\alpha_ks_j]+1}^{[\alpha_kt_j]}\exp\bigg(\ln\bigg(1+\sum_{j'\in\mathcal{J}}(e^{izj'}-1)p_{l_j,j'}^{(k)}\bigg)\bigg)\\
		&\sim\exp\bigg(\sum_{j\in\mathcal{J}}\sum_{l_j=[\alpha_ks_j]+1}^{[\alpha_k t_j]}\sum_{j'\in\mathcal{J}}(e^{izj'}-1)p_{l_j,j'}^{(k)}\bigg)\\
		&=\exp\bigg(\sum_{j'\in\mathcal{J}}(e^{izj'}-1)\sum_{j\in\mathcal{J}}\bigg(\sum_{l_j=1}^{[\alpha_kt_j]}p_{l_j,j'}^{(k)}-\sum_{l_j=1}^{[\alpha_ks_j]}p_{l_j,j'}^{(k)}\bigg)\bigg),
	\end{align*}
	where the approximation in penultimate step follows from (\ref{assm31}) and $\ln(1+x)\sim x$ as $x\longrightarrow0$. Now, on using (\ref{assm32}), we get
	\begin{equation*}
		\lim_{k\to\infty}	\mathbb{E}\exp(iz(U_k(\textbf{t})-U_k(\textbf{s})))=\exp\bigg(\sum_{j'\in\mathcal{J}}(e^{izj'}-1)\lambda_{j'}(t_{j'}-s_{j'})\bigg),\ z\in\mathbb{R},
	\end{equation*}
	which coincides with (\ref{diffcf3}). This completes the proof.
\end{proof}

\begin{remark}
	Let us consider a sequence $\{\beta_k\}_{k\ge1}$ such that $\beta_k>\sum_{j\in\mathcal{J}}\lambda_j$ for all $k\ge1$ and $\beta_k\longrightarrow\infty$ as $k\longrightarrow\infty$. Then, by taking  $p_{l_j,j'}^{(k)}=\lambda_j\delta_{j,j'}/\beta_k$, $(j,j')\in\mathcal{J}\times\mathcal{J}$ and $\beta_k=\alpha_k$, $k\ge1$, we note that the assumptions of Theorem \ref{thm31} satisfy. Here, $\delta_{j,j'}$ is the Kronecker delta function.
\end{remark}
\subsection{Fractional two parameter Skellam process} In \cite{Kerss2014}, a time-changed version of the Skellam process was studied, highlighting its potential relevance to finance. Skellam processes of order $k$, and their time-changed variants are discussed in \cite{Gupta2020, Kataria2024a}. For $\mathcal{J}=\{1,-1\}$, the multiparameter process defined in (\ref{msp2}) reduces to a two parameter Skellam process $\{\mathcal{S}(t_1,t_2),\ (t_1,t_2)\in\mathbb{R}^2_+\}$ defined as follows:
\begin{equation}\label{tsp}
	\mathcal{S}(t_1,t_2)\coloneqq N_1(t_1)-N_2(t_2),
\end{equation}
where $\{N_j(t),\ t\ge0\}$, $j=1,2$ are independent Poisson processes with rates $\lambda_1>0$ and $\lambda_2>0$, respectively. Its distribution is given by
\begin{equation*}
	\mathrm{Pr}\{\mathcal{S}(t_1,t_2)=n\}=e^{-\lambda_1t_1-\lambda_2t_2}\bigg(\frac{\lambda_1t_1}{\lambda_2t_2}\bigg)^{n/2}I_{|n|}(2\sqrt{\lambda_1\lambda_2t_1t_2}),\ n\in\mathbb{Z},\ (t_1,t_2)\in\mathbb{R}^2_+.
\end{equation*}
For $(s_1,s_2)$ and $(t_1,t_2)$ in $\mathbb{R}^2_+$, its mean, variance  and auto covariance are given by $\mathbb{E}\mathcal{S}(t_1,t_2)=\lambda_1t_1-\lambda_2t_2$, $\mathbb{V}\mathrm{ar}\mathcal{S}(t_1,t_2)=\lambda_1t_1+\lambda_2t_2$ and $\mathbb{C}\mathrm{ov}(\mathcal{S}(s_1,s_2),\mathcal{S}(t_1,t_2))=\lambda_1\min\{s_1,t_1\}+\lambda_2\min\{s_2,t_2\}$, respectively.

Next, we consider a time-changed two parameter Skellam process. First, we recall the definition of inverse stable subordinator (see \cite{Meerschaert2011}).

A non-negative real valued L\'evy process $\{D^\alpha(t),\ t\ge0\}$, $\alpha\in(0,1)$ with almost surely non-decreasing sample path is called $\alpha$-stable subordinator if its Laplace transform is given by $\mathbb{E}e^{-uD^\alpha(t)}=e^{-tu^\alpha}$, $u>0$.

 The first passage time process $\{L^\alpha(t),\ t\ge0\}$ defined by $L^\alpha(t)\coloneqq\inf\{s\ge0:D^\alpha(s)\ge t\}$ is called inverse $\alpha$-stable subordinator.
 
 For $\alpha,\beta\in(0,1)$, let $\{L_1^{\alpha}(t),\ t\ge0\}$ and $\{L_2^\beta(t),\ t\ge0\}$ be two independent inverse stable subordinators. We consider the following time-changed two parameter process:
\begin{equation}\label{ftsp}
	\mathcal{S}^{\alpha,\beta}(t_1,t_2)\coloneqq\mathcal{S}(L_1^\alpha(t_1),L_2^\beta(t_2))=N_1(L_1^\alpha(t_1))-N_2(L_2^\beta(t_2)),\ (t_1,t_2)\in\mathbb{R}^2_+,
\end{equation}
where $\{\mathcal{S}(t_1,t_2),\ (t_1,t_2)\in\mathbb{R}^2_+\}$ is a two parameter Skellam process as defined in (\ref{tsp}). It is assumed that all the processes appearing here are independent of each other. 

Recall that the Poisson process time-changed by an inverse stable subordinator is a fractional Poisson process in the sense of \cite{Meerschaert2011}. Hence, the process $\{\mathcal{S}^{\alpha,\beta}(t_1,t_2),\ (t_1,t_2)\in\mathbb{R}^2_+\}$ is a difference of two independent one parameter fractional Poisson processes. 

The distribution of $N_1(L_1^\alpha(t))$ is given by (see \cite{Meerschaert2011})
\begin{equation*}
	\mathrm{Pr}\{N_1(L_1^\alpha(t))=n\}=\frac{(\lambda_1 t^\alpha)^n}{n!}\sum_{r=0}^{\infty}\frac{(n+r)!}{r!}\frac{(-\lambda_1t^\alpha)^r}{\Gamma(\alpha(n+r)+1)},\ n\ge0,\ t\ge0.
\end{equation*}
Thus, for $n\ge0$, the point probabilities of $\mathcal{S}^{\alpha,\beta}(t_1,t_2)$ are
\begin{align*}
	\mathrm{Pr}\{\mathcal{S}^{\alpha,\beta}(t_1,t_2)=n\}&=\sum_{l=0}^{\infty}\mathrm{Pr}\{N_1{L_1^\alpha(t_1)}=n+l\}\mathrm{Pr}\{N_2(L_2^\beta(t_2))=l\}\\
	&=\sum_{l=0}^{\infty}\frac{(\lambda_1t_1^\alpha)^{n+l} (\lambda_2t_2^\beta)^l}{(n+l)!l!}\sum_{r_1=0}^{\infty}\sum_{r_2=0}^{\infty}\frac{(n+l+r_1)!(l+r_2)!(-\lambda_1t_1^\alpha)^{r_1}(-\lambda_2 t_2^\beta)^{r_2}}{\Gamma(\alpha(n+l+r_1)+1)\Gamma(\beta(l+r_2)+1)r_1!r_2!}\\
	&=(\lambda_1t_1^\alpha)^n\sum_{r_1=0}^{\infty}\sum_{r_2=0}^{\infty}\frac{(-\lambda_1t_1^\alpha)^{r_1}(-\lambda_2 t_2^\beta)^{r_2}}{r_1!r_2!}\\
	&\hspace{1cm}\cdot{}_2\Psi_3\Bigg[\begin{matrix}
		\ \ \ \ \ \ \ \ \ \ \ \ (n+r_1+1,1),&(r_2+1,1)\\\\
		(\alpha(n+r_1)+1,\alpha),&(\beta r_2+1,\beta),&(n+1,1)
	\end{matrix}\bigg|\lambda_1\lambda_2t_1^\alpha t_2^\beta\Bigg],
\end{align*}
where ${}_2\Psi_3$ is the generalized Wright function, for definition see \cite{Kilbas2006}, p. 56, Eq. (1.11.14).

For $n<0$, we have
\begin{align*}
	\mathrm{Pr}\{\mathcal{S}^{\alpha,\beta}(t_1,t_2)=n\}&=\sum_{l=0}^{\infty}\mathrm{Pr}\{N_1{L_1^\alpha(t_1)}=l\}\mathrm{Pr}\{N_2(L_2^\beta(t_2))=l+|n|\}\\
	&=(\lambda_2t_2^\beta)^{|n|}\sum_{r_1=0}^{\infty}\sum_{r_2=0}^{\infty}\frac{(-\lambda_1t_1^\alpha)^{r_1}(-\lambda_2 t_2^\beta)^{r_2}}{r_1!r_2!}\\
	&\hspace{1cm}\cdot{}_2\Psi_3\Bigg[\begin{matrix}
		\ \ \ \ \ \ \ \ \ \ \ \ (|n|+r_1+1,1),&(r_2+1,1)\\\\
		(\beta(|n|+r_1)+1,\beta),&(\alpha r_2+1,\alpha),&(|n|+1,1)
	\end{matrix}\bigg|\lambda_1\lambda_2t_1^\alpha t_2^\beta\Bigg].
\end{align*}

For $(t_1,t_2)\in\mathbb{R}^2_+$, the mean and variance of (\ref{ftsp}) are given by
\begin{equation*}
	\mathbb{E}\mathcal{S}^{\alpha,\beta}(t_1,t_2)=\frac{\lambda_1t_1^\alpha}{\Gamma(\alpha+1)}-\frac{\lambda_2t_2^\beta}{\Gamma(\beta+1)}
\end{equation*}
and
\begin{align*}
	\mathbb{V}\mathrm{ar}\mathcal{S}^{\alpha,\beta}(t_1,t_2)&=\frac{\lambda_1t_1^\alpha}{\Gamma(\alpha+1)}+\frac{\lambda_1t_1^\alpha}{\alpha}\bigg(\frac{1}{\Gamma(2\alpha)}-\frac{1}{\alpha\Gamma^2(\alpha)}\bigg)\\
	&\ \ +\frac{\lambda_2t_2^\beta}{\Gamma(\beta+1)}+\frac{\lambda_2t_2^\beta}{\beta}\bigg(\frac{1}{\Gamma(2\beta)}-\frac{1}{\beta\Gamma^2(\beta)}\bigg),
\end{align*}
respectively.

\end{document}